\newtheorem{theorem}{Theorem}[section]
\theoremstyle{definition}
\newtheorem{definition}[theorem]{Definition}
\newtheorem{proposition}{Proposition}[section]
\theoremstyle{definition} 
\newtheorem*{maintheorem*}{Main Theorem}
\begin{document}

\begin{frontmatter}

\title{$L$-Topology via Generalised Geometric Logic}




\author[mymainaddress]{Purbita Jana\corref{mycorrespondingauthor}}
\cortext[mycorrespondingauthor]{Corresponding author}
\ead{purbita\_presi@yahoo.co.in}

\address[mymainaddress]{The Institute of Mathematical Sciences (IMSc.), Chennai, India}

\begin{abstract}
This paper introduces a notion of generalised geometric logic. Connections of generalised geometric logic with L-topological system and L-topological space are established.
\end{abstract}

\begin{keyword}
geometric logic\sep fuzzy geometric logic\sep generalised geometric logic
\end{keyword}

\end{frontmatter}


\section{Introduction}
This work is motivated by S. Vickers's work on topology via logic \cite{SV}. To show the connection of topology with geometric logic, the notion of topological system played a crucial role. A topological system is a triple $(X,\models, A)$, consisting of a non-empty set $X$, a frame $A$ and a binary relation $\models$ (known as satisfaction relation) between $X$ and $A$ satisfying certain conditions. The notion of topological system was introduced by S. Vickers in 1989. Topological system is an interesting mathematical structure, which unifies the concepts of topology, algebra, logic in a single framework. In our earlier work \cite{MP3}, we had introduced a notion of fuzzy geometric logic to answer the question viz. ``From which logic can fuzzy topology be studied?". For this purpose first of all we introduced the notion of fuzzy topological system \cite{PJ} which is a triple $(X,\models,A)$ consisting of a non-empty set $X$, a frame $A$ and a fuzzy relation $\models$ (i.e. $[0,1]$ valued relation) from $X$ to $A$. J. Denniston et al. introduced the notion of lattice valued topological system ($L$-topological system) by considering frame valued relation between $X$ and $A$. In \cite{DM1}, categorical relationship of Lattice valued topological space ($L$-topological space) with frame was established using the categorical relationships of them with $L$-topological system. Moreover categorical equivalence between spatial $L$-topological system with $L$-topological space was shown. In this paper the main focus is to answer the question viz. ``From which logic can $L$-topology be studied?". From \cite{MP3}, it is clear that the satisfaction relation $\models$ of fuzzy topological system reflects the notion of satisfiability ($sat$) of a geometric formula by a sequence over the domain of interpretation of the corresponding logic. Hence we considered the grade of satisfiabilty from $[0,1]$. As for $L$-topological system the satisfaction relation is an $L$ (frame)-valued relation, the natural tendency is to consider the grade of satisfiability from $L$. Keeping this in mind, generalised geometric logic (c.f. Section \ref{GGL}) is proposed to provide the answer of the raised question successfully.

The paper is organised as follows. Section \ref{pre}, includes some of the preliminary definitions and results which are used in the sequel. Generalised geometric logic is proposed and studied in details in Section \ref{GGL}. Section \ref{ltsy}, explains the connection of the proposed logic with $L$-topological system whereas Section \ref{lts}, contains the study of the connection of the proposed logic with $L$-topological space. Section \ref{con}, concludes the work presented in this article and provides some of the future directions.
\section{Preliminaries}\label{pre}
In this section we include a brief outline of relevant notions to develop our proposed mathematical structures and results. In \cite{MP3, CC, DM1, UH, MP, PT, SO,  SV, LZ} one may found the details of the notions stated here.
\begin{definition}[Frame]\label{fm}
A \textbf{frame} \index{frame} is a complete lattice such that, $$x\wedge\bigvee Y=\bigvee\{x\wedge y \mid y\in Y\}.$$
i.e., the binary meet distributes over arbitrary join.
\end{definition}
\begin{definition}[Fuzzy topological space]\label{3.1_1t}
 Let $X$ be a set, and $\tau$ be a collection of fuzzy subsets of $X$ s.t.
\begin{enumerate}
\item $\tilde\emptyset$ , $\tilde X\in \tau$, where $\tilde\emptyset (x)=0$, for all $x\in X$ and $\tilde X (x)=1$, for all $x\in X$;
\item $\tilde A_i\in\tau$ for $i\in I\ \text{implies}\ \bigcup_{i\in I}\tilde A_i\in \tau$, where $\bigcup_{i\in I}\tilde{A_i}(x)=sup_{i\in I}(\tilde{A_i}(x))$;
\item $\tilde A_1$ , $\tilde A_2\in\tau\ \text{implies}\ \tilde A_1\cap\tilde A_2\in\tau$, where $(\tilde{A_1}\cap \tilde{A_2})(x)=min\{\tilde{A_1}(x), \tilde{A_2}(x)\}$.
\end{enumerate}
Then $(X,\tau)$ is called a \textbf{fuzzy topological space}\index{fuzzy topological!space}. $\tau$ is called a \textbf{fuzzy topology} over $X$. 
\end{definition}
Elements of $\tau$ are called \textbf{fuzzy open sets}\index{fuzzy!open sets} of fuzzy topological space $(X,\tau)$.
\begin{definition}[$L$-topological space]\label{3.1_1t}
 Let $X$ be a set, and $\tau$ be a collection of $L$-fuzzy subsets of $X$ i.e., $\tilde{A}:X\to L$, where $L$ is a frame, s.t.
\begin{enumerate}
\item $\tilde\emptyset$ , $\tilde X\in \tau$, where $\tilde\emptyset (x)=0_L$, for all $x\in X$ and $\tilde X (x)=1_L$, for all $x\in X$;
\item $\tilde A_i\in\tau$ for $i\in I\ \text{implies}\ \bigcup_{i\in I}\tilde A_i\in \tau$, where $\bigcup_{i\in I}\tilde{A_i}(x)=sup_{i\in I}(\tilde{A_i}(x))$;
\item $\tilde A_1$ , $\tilde A_2\in\tau\ \text{implies}\ \tilde A_1\cap\tilde A_2\in\tau$, where $(\tilde{A_1}\cap \tilde{A_2})(x)=\tilde{A_1}(x)\wedge \tilde{A_2}(x)$.
\end{enumerate}
Then $(X,\tau)$ is called an \textbf{$L$-topological space}\index{L topological!space}. $\tau$ is called an \textbf{$L$-topology} over $X$. 
\end{definition}
Elements of $\tau$ are called \textbf{$L$-open sets}\index{L!open sets} of $L$-topological space $(X,\tau)$.
\begin{definition} \cite{SV}
 A \textbf{topological system}\index{topological!system} is a triple, $(X,\models,A)$, consisting of a non empty set $X$, a frame $A$ and a binary relation $\models\subseteq  X\times A$ from $X$ to $A$ such that:
\begin{enumerate}
\item for \textbf{any finite subset} $S$ of $A$, $x\models\bigwedge S$ if and only if $x\models a$ for all $a\in S$;
\item for \textbf{any subset} $S$ of $A$, $x\models \bigvee S$ if and only if $x\models a$ for some $a\in S$.
\end{enumerate}
\end{definition}
\begin{definition}[$L$-topological system]\label{3.1}
 An \textbf{$L$-topological system}\index{L topological!system} is a triple $(X,\models ,A)$, where $X$ is a non-empty set, $A$ is a frame and $\models$ is an $L$-valued relation from $X$ to $A$ ($\models:X\times A\to L$) such that
 \begin{enumerate}
\item if $S$ is a $\mathsf{finite}$ $\mathsf{ subset}$ of $A$, then
$gr(x\models \bigwedge S) = inf\{ gr(x\models s)\mid s\in S\}$;
\item if $S$ is  $\mathsf{any}$ $\mathsf{ subset}$ of $A$, then
$gr(x\models \bigvee S)=sup\{ gr(x\models s)\mid s\in S\}$.
\end{enumerate}
\end{definition}
For our convenience $\models(x,a)$ will be expressed as $gr(x\models a)$ throughout this article. It is to be noted that $\bigwedge S$ is either $a_1\wedge a_2\wedge \dots \wedge a_n$ if $S=\{a_1,a_2,\dots ,a_n\}$ and is $\top$ if $S=\emptyset$. Note that if $L=[0,1]$ then the triple is known as fuzzy topological system. 
\begin{definition}[Spatial]\label{spatial}
An $L$-topological system $(X,\models,A)$ is said to be \textbf{spatial}\index{spatial} if and only if (for any $x\in X$, $gr(x\models a)=gr(x\models b)$) imply ($a=b$), for any $a,b\in A$.
\end{definition}
\begin{theorem}\label{spatialtop}
Category of spatial $L$-topological systems, for a fixed $L$, is equivalent to the category of $L$-topological spaces.
\end{theorem}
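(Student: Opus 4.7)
The plan is to exhibit a pair of functors between the two categories and verify they are quasi-inverse to one another, mirroring the classical Vickers equivalence but carried out entirely inside the $L$-valued setting.

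First I would set up the objects. Define a functor $\textsf{Sp}$ from spatial $L$-topological systems to $L$-topological spaces by sending $(X,\models,A)$ to $(X,\tau_{\models})$, where
\[
\tau_{\models}=\{\tilde a : X\to L \mid a\in A\},\qquad \tilde a(x):=gr(x\models a).
\]
Clauses (1) and (2) of \rdef{3.1} force $\tau_{\models}$ to be closed under finite meets and arbitrary joins in $L^{X}$, so this really is an $L$-topology. In the opposite direction define $\textsf{Ex}$ by sending $(X,\tau)$ to $(X,\models_\tau,\tau)$, where the frame structure on $\tau$ is the pointwise one and $gr(x\models_\tau \tilde a):=\tilde a(x)$; axioms (1) and (2) then follow directly from the pointwise definitions of $\wedge$ and $\bigvee$ in $\tau$, while spatiality holds because distinct $L$-open sets differ on some point.

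Next I would handle morphisms. A morphism $(f,\phi):(X,\models_X,A)\to(Y,\models_Y,B)$ of $L$-topological systems consists of a map $f:X\to Y$ and a frame homomorphism $\phi:B\to A$ with $gr(x\models_X\phi(b))=gr(f(x)\models_Y b)$. Send this to $f$; the compatibility condition says $\widetilde{\phi(b)}=\tilde b\circ f$, which is exactly continuity of $f:(X,\tau_{\models_X})\to(Y,\tau_{\models_Y})$ in the $L$-topological sense. Conversely, a continuous $f:(X,\tau)\to(Y,\sigma)$ yields the pair $(f,f^{\leftarrow})$ with $f^{\leftarrow}(\tilde b)=\tilde b\circ f$, which is a frame homomorphism $\sigma\to\tau$ satisfying the compatibility identity. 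Functoriality in each direction is a routine check.

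Then I would produce the natural isomorphisms. The composite $\textsf{Sp}\circ\textsf{Ex}$ applied to $(X,\tau)$ returns $(X,\{\widetilde{\tilde a}\mid \tilde a\in\tau\})=(X,\tau)$ on the nose, giving an identity natural transformation. The composite $\textsf{Ex}\circ\textsf{Sp}$ applied to $(X,\models,A)$ produces $(X,\models_{\tau_{\models}},\tau_{\models})$; the candidate isomorphism is $(\mathrm{id}_X,\eta)$ where $\eta:\tau_{\models}\to A$ is a chosen inverse of the surjection $a\mapsto\tilde a$. Here spatiality is essential: it upgrades $a\mapsto\tilde a$ to a bijection $A\to\tau_{\models}$, and the two $L$-topological-system axioms together with the frame operations on $\tau_{\models}$ being pointwise force this bijection to preserve $\bigwedge$ (finite) and $\bigvee$ (arbitrary), hence be a frame isomorphism. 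Naturality reduces to unwinding the compatibility condition on morphisms.

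The main obstacle, and the step I would write out most carefully, is precisely the verification that spatiality promotes the canonical surjection $a\mapsto\tilde a$ into a frame isomorphism: one must show $\widetilde{a\wedge b}=\tilde a\wedge \tilde b$ and $\widetilde{\bigvee_i a_i}=\bigvee_i \tilde a_i$ pointwise, and then invoke spatiality to conclude the corresponding equalities in $A$ from the equalities of their graded satisfactions. Once this is in hand, everything else is formal, and the two natural isomorphisms assemble into the required equivalence of categories.
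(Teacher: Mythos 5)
Your proposal is correct and follows essentially the same route as the source the paper relies on: \rth{spatialtop} is stated here without proof, being imported from \cite{DM1}, where the equivalence is obtained exactly as you describe, via the two functors $(X,\models,A)\mapsto(X,\tau_{\models})$ and $(X,\tau)\mapsto(X,\models_{\tau},\tau)$ together with the spatiality-driven frame isomorphism $a\mapsto\tilde a$. One small precision: the identities $\widetilde{a\wedge b}=\tilde a\wedge\tilde b$ and $\widetilde{\bigvee_i a_i}=\bigvee_i\tilde a_i$ follow from the two clauses of \rdef{3.1} alone, while spatiality is needed only to make $a\mapsto\tilde a$ injective; your closing paragraph has this division of labour right, even though the earlier phrase ``spatiality \dots{} force this bijection to preserve'' suggests otherwise.
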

\section{Generalised Geometric Logic}\label{GGL}
In this section we will introduce the notion of generalised geometric logic which may be considered as a generalisation of fuzzy geometric logic and consequently of so called geometric logic. Detailed studies on fuzzy logic, geometric logic and fuzzy geometric logic may be found in \cite{MP3, PH, SI, VN1, JP, SV1, SVI, PJT, SV}.

The \textbf{alphabet} of the language $\mathscr{L}$ of generalised geometric logic comprises of the connectives $\wedge$, $\bigvee$, the existential quantifier $\exists$, parentheses $)$ and $($ as well as:
\begin{itemize}
\item countably many individual constants $c_1,c_2,\dots$;
\item denumerably many individual variables $x_1,x_2,\dots$;
\item propositional constants $\top$, $\bot$;
\item for each $i>0$, countably many $i$-place predicate symbols $p^i_j$'s, including at least the $2$-place symbol ``$=$" for identity;
\item for each $i>0$, countably many $i$-place function symbols $f^i_j$'s.
\end{itemize}
\begin{definition}[Term]\label{term}
\textbf{Terms} are recursively defined in the usual way.
\begin{itemize}
\item every constant symbol $c_i$ is a term;
\item every variable $x_i$ is a term;
\item if $f_j$ is an $i$-place function symbol, and $t_1,t_2,\dots,t_i$ are terms then\\ $f^i_jt_1t_2\dots t_i$ is a term;
\item nothing else is a term.
\end{itemize}
\end{definition}
\begin{definition}[Geometric formula]\label{wff}
\textbf{Geometric formulae} are recursively defined as follows:
\begin{itemize}
\item $\top$, $\bot$ are geometric formulae;
\item if $p_j$ is an $i$-place predicate symbol, and $t_1,t_2,\dots,t_i$ are terms then $p^i_jt_1t_2\dots t_i$ is a geometric formula;
\item if $t_i$, $t_j$ are terms then $(t_i=t_j)$ is a geometric formula;
\item if $\phi$ and $\psi$ are geometric formulae then $(\phi\wedge\psi)$ is a geometric formula;
\item if $\phi_i$'s ($i\in I$) are geometric formulae then $\bigvee\{\phi_i\}_{i\in I}$ is a geometric formula, when $I=\{1,2\}$ then the above formula is written as $\phi_1\vee \phi_2$;
\item if $\phi$ is a geometric formula and $x_i$ is a variable then $\exists x_i\phi$ is a geometric formula;
\item nothing else is a geometric formula.
\end{itemize}
\end{definition}
\begin{definition}\label{t}
$t[t'/ x]$ is the result of replacing $t'$ for every occurrence of $x$ in $t$, defined recursively as follows:
\begin{itemize}
\item if $t$ is $c_i$ or $x_i$ other than $x$ then $t[t'/x]$ is $t$;
\item if $t$ is $x$ then $t[t'/x]$ is $t'$;
\item if $t$ is $f^i_jt_1t_2\dots t_i$ then $t[t'/x]$ is $f^i_jt_1[t'/x]t_2[t'/x]\dots t_i[t'/x]$.
\end{itemize}
\end{definition}
\begin{definition}\label{phi}
$\phi[t/x]$ is the result of replacing $t$ for every free occurrence of $x$ in $\phi$, defined recursively as follows:
\begin{itemize}
\item if $\phi$ is $p^i_jt_1t_2\dots t_i$ then $\phi[t/x]$ is $p^i_jt_1[t/x]t_2[t/x]\dots t_i[t/x]$;
\item if $\phi$ is $(t_i=t_j)$ then $\phi[t/x]$ is $(t_i[t/x]=t_j[t/x])$;
\item if $\phi$ is $\phi_1\wedge\phi_2$ then $\phi[t/x]$ is $\phi_1[t/x]\wedge\phi_2[t/x]$;
\item if $\phi$ is $\phi_1\vee\phi_2$ then $\phi[t/x]$ is $\phi_1[t/x]\vee\phi_2[t/x]$;
\item if $\phi$ is $\bigvee\{\phi_i\}_{i\in I}$ then $\phi[t/x]$ is $\bigvee\{\phi_i[t/x]\}_{i\in I}$;
\item if $\phi$ is $\top$ or $\bot$ then $\phi[t/x]$ is $\top$ or $\bot$ respectively;
\item if $\phi$ is $\exists x_i\psi$ ($x_i$ is other than $x$) then $\phi[t/x]$ is $\exists x_i\psi[t/x]$;
\end{itemize}
\end{definition}
\begin{definition}[Interpretation]\label{interpretation}
An \textbf{interpretation} $I$ consists of
\begin{itemize}
\item a set $D$, called the domain of interpretation;
\item an element $I(c_i)\in D$ for each constant $c_i$;
\item a function $I(f^i_j):D^i\longrightarrow D$ for each function symbol $f^i_j$;
\item an L-fuzzy relation $I(p^i_j):D^i\longrightarrow L$, where $L$ is a frame, for each predicate symbol $p^i_j$ i.e. it is an L-fuzzy subset of $D^i$.
\end{itemize}
\end{definition}
\begin{definition}[Graded Satisfiability]\label{sat}
Let $s$ be a sequence over $D$. Let $s=(s_1,s_2,\dots)$ be a sequence over $D$ where $s_1,s_2,\dots$ are all elements of $D$. Let $d$ be an element of $D$. Then $s(d/x_i)$ is the result of replacing $i$'th coordinate of $s$ by $d$ i.e., $s(d/x_i)=(s_1,s_2,\dots,s_{i-1},d,s_{i+1},\dots)$. Let $t$ be a term. Then $s$ assigns an element $s(t)$ of $D$ as follows:
\begin{itemize}
\item if $t$ is the constant symbol $c_i$ then $s(c_i)=I(c_i)$;
\item if $t$ is the variable $x_i$ then $s(x_i)=s_i$;
\item if $t$ is the function symbol $f^i_jt_1t_2\dots t_i$ then\\ $s(f^i_jt_1t_2\dots t_i)=I(f^i_j)(s(t_1),s(t_2),\dots,s(t_i))$.
\end{itemize}
Now we define grade of satisfiability of $\phi$ by $s$ written as $gr(s\ \emph{sat}\ \phi)$, where $\phi$ is a geometric formula, as follows:
\begin{itemize}
\item $gr(s\ \emph{sat}\ p^i_jt_1t_2\dots t_i)=I(p^i_j)(s(t_1),s(t_2),\dots,s(t_i))$;
\item $gr(s\ \emph{sat}\ \top)=1_L$;
\item $gr(s\ \emph{sat}\ \bot)=0_L$;
\item $gr(s\ \emph{sat}\ t_i=t_j)$ $=\begin{cases}
        1_L & \emph{if $s(t_i)=s(t_j)$} \\
        0_L & \emph{otherwise};
    \end{cases}$
\item $gr(s\ \emph{sat}\ \phi_1\wedge\phi_2)=gr(s\ \emph{sat}\ \phi_1)\wedge gr(s\ \emph{sat}\ \phi_2)$;
\item $gr(s\ \emph{sat}\ \phi_1\vee\phi_2)=gr(s\ \emph{sat}\ \phi_1)\vee gr(s\ \emph{sat}\ \phi_2)$;
\item $gr(s\ \emph{sat}\ \bigvee\{\phi_i\}_{i\in I})=sup\{gr(s\ \emph{sat}\ \phi_i)\mid i\in I\}$;
\item $gr(s\ \emph{sat}\ \exists x_i\phi)=sup\{gr(s(d/ x_i)\ \emph{sat}\ \phi)\mid d\in D\}$.
\end{itemize}
\end{definition}
 Throughout this article $\wedge$ and $\vee$ in $L$ will stand for the meet and join of the frame $L$ respectively.
The expression $\phi\vdash\psi$, where $\phi$, $\psi$ are wffs, is called a sequent. We now define satisfiability of a sequent.
\begin{definition}\label{valid}
1. $s$ \emph{sat} $\phi\vdash\psi$ iff $gr(s\ \emph{sat}\ \phi)\leq gr(s\ \emph{sat}\ \psi)$.\\
2. $\phi\vdash\psi$ is valid in $I$ iff $s$ \emph{sat} $\phi\vdash\psi$ for all $s$ in the domain of $I$.\\
3. $\phi\vdash\psi$ is universally valid iff it is valid in all interpretations.
\end{definition}
\begin{theorem}\label{LDI}
Let $I$ be an interpretation and $t$ be a term. If the sequences $s$ and $s'$ are such that they agree on the variables occurring in the term $t$ then $s(t)=s'(t)$.
\end{theorem}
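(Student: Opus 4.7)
The proof I would give proceeds by structural induction on the term $t$, following the recursive definition of terms in Definition~\ref{term} and the corresponding clauses for $s(t)$ in Definition~\ref{sat}. The statement ``$s$ and $s'$ agree on the variables occurring in $t$'' gives exactly the hypothesis needed to propagate equality through each clause of the recursion, so the result essentially falls out of the definitions.

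For the base cases, if $t$ is a constant symbol $c_i$, then no variables occur in $t$, the hypothesis is vacuous, and $s(c_i) = I(c_i) = s'(c_i)$ directly from the first clause of Definition~\ref{sat}. If $t$ is a variable $x_i$, then $x_i$ itself is the unique variable occurring in $t$, so the hypothesis forces $s_i = s'_i$, whence $s(x_i) = s_i = s'_i = s'(x_i)$.

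For the inductive step, suppose $t$ has the form $f^i_j t_1 t_2 \dots t_i$, and assume the statement holds for each $t_k$ with $1 \le k \le i$. The variables occurring in each $t_k$ are among the variables occurring in $t$, so $s$ and $s'$ still agree on all variables occurring in each $t_k$. By the induction hypothesis, $s(t_k) = s'(t_k)$ for every $k$. Then, unfolding the third clause of the assignment in Definition~\ref{sat}, we get $s(f^i_j t_1 \dots t_i) = I(f^i_j)(s(t_1), \dots, s(t_i)) = I(f^i_j)(s'(t_1), \dots, s'(t_i)) = s'(f^i_j t_1 \dots t_i)$, which closes the induction.

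There is no genuine obstacle here: the only mildly delicate bookkeeping is the observation that the variables occurring in a sub-term $t_k$ form a subset of the variables occurring in the compound term $f^i_j t_1 \dots t_i$, which is needed to apply the induction hypothesis to each $t_k$. Everything else is a mechanical unfolding of the definitions, so I would keep the write-up short.
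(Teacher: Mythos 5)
Your proof is correct and follows exactly the route the paper intends: the paper's proof is simply ``By induction on $t$,'' and your write-up is a faithful, fully spelled-out version of that structural induction, including the one bookkeeping point about variables of sub-terms. Nothing further is needed.
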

\begin{proof}
By induction on $t$.
\end{proof}
\begin{theorem}\label{LDII}
Let $I$ be an interpretation and $\phi$ be a geometric formula. If the sequences $s$ and $s'$ are such that they agree on the free variables occurring in $\phi$ then $gr(s\ \emph{sat}\ \phi)=gr(s'\ \emph{sat}\ \phi)$.
\end{theorem}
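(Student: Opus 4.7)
The plan is to prove the statement by structural induction on the geometric formula $\phi$, following Definition \ref{wff}, and to invoke \rth{LDI} in the atomic cases to pass from agreement on free variables to agreement of term-values $s(t)=s'(t)$.

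First I would dispatch the trivial base cases: for $\phi=\top$ and $\phi=\bot$, both $gr(s\ \emph{sat}\ \phi)$ and $gr(s'\ \emph{sat}\ \phi)$ are the constants $1_L$ or $0_L$ respectively. For $\phi=p^i_jt_1\dots t_i$, the free variables of $\phi$ are exactly those occurring in the terms $t_1,\dots,t_i$; since $s$ and $s'$ agree on them, \rth{LDI} gives $s(t_k)=s'(t_k)$ for each $k$, and both grades equal $I(p^i_j)(s(t_1),\dots,s(t_i))$. The equality case $\phi=(t_i=t_j)$ is handled identically: by \rth{LDI} both sequences yield the same pair of values for $(t_i,t_j)$, and the case split in \rdef{sat} delivers the same grade.

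For the propositional inductive cases ($\phi_1\wedge\phi_2$, $\phi_1\vee\phi_2$, and the infinite disjunction $\bigvee\{\phi_i\}_{i\in I}$), the free variables of the compound formula are the union of the free variables of the components, so $s$ and $s'$ agree on the free variables of every component. The induction hypothesis gives equality of the grades component-wise, and the defining equations in \rdef{sat} (meet, join, supremum in $L$) then give equality for the compound formula.

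The main obstacle, and the only genuinely delicate case, is the existential $\phi=\exists x_i\,\psi$. Here the free variables of $\phi$ are the free variables of $\psi$ with $x_i$ removed, so we only know that $s$ and $s'$ agree off coordinate $i$ (restricted to those indices which are free in $\psi$). The key observation is that for any fixed $d\in D$, the sequences $s(d/x_i)$ and $s'(d/x_i)$ now agree on every free variable of $\psi$: at index $i$ both are $d$, and at any other index $j$ where $x_j$ is free in $\psi$, the variable $x_j$ is also free in $\phi$, so $s_j=s'_j$. The induction hypothesis applied to $\psi$ therefore yields $gr(s(d/x_i)\ \emph{sat}\ \psi)=gr(s'(d/x_i)\ \emph{sat}\ \psi)$ for every $d\in D$, and taking suprema over $d\in D$ gives $gr(s\ \emph{sat}\ \exists x_i\psi)=gr(s'\ \emph{sat}\ \exists x_i\psi)$, completing the induction.
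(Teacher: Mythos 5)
Your proposal is correct and follows exactly the route the paper intends: the paper's proof is simply stated as ``by induction on $\phi$,'' and your detailed structural induction --- invoking Theorem~\ref{LDI} in the atomic and equality cases and handling the existential case by noting that $s(d/x_i)$ and $s'(d/x_i)$ agree on all free variables of $\psi$ before taking suprema --- is the standard unpacking of that argument. Nothing is missing; your write-up just supplies the details the paper leaves implicit.
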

\begin{proof}
By induction on $\phi$.
\end{proof}
\begin{theorem}[Substitution Theorem]\label{substitution}
Let $D$ be the domain of interpretation $I$:
\begin{enumerate}
\item Let $t$ and $t'$ be terms. For every sequence $s$ over $D$,\\ $s(t[t'/x_k])=s(s(t')/x_k)(t)$.
\item Let $\phi$ be a geometric formula and $t$ be a term. For every sequence $s$ over $D$, $gr(s\ \emph{sat}\ \phi[t/x_k])=gr(s(s(t)/x_k)\ \emph{sat}\ \phi)$.
\end{enumerate}
\end{theorem}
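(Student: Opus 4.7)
The plan is to prove part (1) by structural induction on the term $t$, and then use it as the atomic-case input for a second structural induction on the geometric formula $\phi$ that establishes part (2).

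For part (1), I would verify three base cases. If $t = c_i$, both sides of the claimed equality reduce to $I(c_i)$. If $t = x_j$ with $j \neq k$, then $t[t'/x_k] = t$ and $s(s(t')/x_k)(x_j) = s_j$, so both sides equal $s_j$. If $t = x_k$, then $t[t'/x_k] = t'$, so the left side is $s(t')$, while the right side $s(s(t')/x_k)(x_k)$ is also $s(t')$ by construction of the modified sequence. The inductive step for $t = f^n_j t_1 \dots t_n$ follows by unfolding the recursive definitions of substitution in terms (Definition \ref{t}) and of $s$ on composite terms, then applying the induction hypothesis to each $t_\ell$ and invoking $I(f^n_j)$ on both sides.

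For part (2), the atomic cases for predicates $p^i_j t_1 \dots t_i$ and for equality $t_i = t_j$ both reduce directly to part (1) applied to each argument term. The propositional constants $\top,\bot$ are immediate since substitution leaves them unchanged. The cases $\phi_1 \wedge \phi_2$, $\phi_1 \vee \phi_2$ and $\bigvee\{\phi_i\}_{i \in I}$ go through by unfolding $\phi[t/x_k]$ using Definition \ref{phi}, applying the definition of graded satisfiability, and invoking the induction hypothesis together with the fact that meets, joins and arbitrary suprema in $L$ are well-defined.

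The main obstacle, as usual, is the existential case $\phi = \exists x_i \psi$, which by Definition \ref{phi} is only treated when $x_i \neq x_k$. Expanding and applying the induction hypothesis gives
$$gr(s\ \text{sat}\ \exists x_i \psi[t/x_k]) = \sup_{d \in D} gr\bigl(s(d/x_i)\bigl(s(d/x_i)(t)/x_k\bigr)\ \text{sat}\ \psi\bigr),$$
whereas the target is $\sup_{d \in D} gr(s(s(t)/x_k)(d/x_i)\ \text{sat}\ \psi)$. Reconciling the two suprema amounts to showing that the sequences $s(d/x_i)(s(d/x_i)(t)/x_k)$ and $s(s(t)/x_k)(d/x_i)$ coincide. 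Since $i \neq k$, the two coordinate updates commute, so it suffices to show $s(d/x_i)(t) = s(t)$. This holds by Theorem \ref{LDI} under the standard side condition that $x_i$ does not occur in $t$ (the capture-avoidance hypothesis implicit in the substitution rule). Once this is in place the suprema match and the induction closes.
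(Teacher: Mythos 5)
Your proof is correct and follows exactly the route the paper intends, since the paper's own proof is just the one-line ``by induction on $t$ and $\phi$ respectively''; you have simply supplied the details. Your observation in the existential case that one needs $x_i$ not to occur in $t$ (capture avoidance, via Theorem \ref{LDI}) is a genuine side condition the paper leaves implicit, and flagging it is appropriate.
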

\begin{proof}
By induction on $t$ and $\phi$ respectively.
\end{proof}
\subsection{Rules of Inference}
In this subsection the rules of inference for generalised geometric logic are given. A rule of inference for generalised geometric logic is of the form \\\AxiomC{$\mathscr{S}_1,\mathscr{S}_2,\dots ,\mathscr{S}_i$}
\UnaryInfC{$\mathscr{S}$}
\DisplayProof, where each of the $\mathscr{S}_1,\mathscr{S}_2,\dots,\mathscr{S}_i$ and $\mathscr{S}$ is a sequent. The sequents $\mathscr{S}_1,\mathscr{S}_2,\dots,\mathscr{S}_i$ are known as premises and the sequent $\mathscr{S}$ is called the conclusion. It should be noted that for a rule of inference the set of premises can be empty also.\\
The rules of inference for generalised geometric logic are as follows.
\begin{enumerate}
\item 
$\phi\vdash\phi$,
\item
\AxiomC{$\phi\vdash \psi$}
\AxiomC{$\psi\vdash\chi$}
\BinaryInfC{$\phi\vdash\chi$}
\DisplayProof ,
\item
(i) $\phi\vdash\top$,\ \ \ \  
(ii) $\phi\wedge\psi\vdash\phi$,\ \ \ \ 
(iii) $\phi\wedge\psi\vdash\psi$,\ \ \ \ 
(iv)\AxiomC{$\phi\vdash\psi$}
\AxiomC{$\phi\vdash\chi$}
\BinaryInfC{$\phi\vdash\psi\wedge\chi$}
\DisplayProof ,
\item 
(i) $\phi\vdash\bigvee S$ ($\phi\in S$),\hspace{24pt}
(ii)\AxiomC{$\phi\vdash\psi$}
\AxiomC{all $\phi\in S$}
\BinaryInfC{$\bigvee S\vdash \psi$}
\DisplayProof ,
\item
$\phi\wedge\bigvee S\vdash\bigvee\{\phi\wedge\psi\mid\psi\in S\}$,
\item
$\top\vdash (x=x)$,
\item
$((x_1,\dots,x_n)=(y_1,\dots,y_n))\wedge\phi\vdash\phi[(y_1,\dots,y_n)\mid(x_1,\dots,x_n)]$,
\item
(i)\AxiomC{$\phi\vdash\psi[x\mid y]$}
\UnaryInfC{$\phi\vdash\exists y\psi$}
\DisplayProof ,
\hspace{24pt}
(ii)\AxiomC{$\exists y\phi\vdash\psi$}
\UnaryInfC{$\phi[x\mid y]\vdash\psi$}
\DisplayProof ,
\item
$\phi\wedge (\exists y)\psi\vdash(\exists y)(\phi\wedge\psi)$.
\end{enumerate}
\subsection{Soundness}
The soundness of a rule means that if all the premises are valid in an interpretation $I$ then the conclusion must also valid in the same interpretation $I$. Satisfaction relation being many-valued, the validity of a sequent has a meaning different from that in the classical geometric logic. In this subsection we will show the soundness of the above rules of inference.
\begin{theorem}
The rules of inference for generalised geometric logic are universally valid.
\end{theorem}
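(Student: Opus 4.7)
The plan is to verify each of the nine rules in turn, checking that whenever every premise holds at every sequence $s$ over the domain $D$ of an arbitrary interpretation $I$, the conclusion holds at every $s$ over $D$ as well. Most rules reduce to elementary properties of the frame order on $L$, combined with the recursive clauses of Definition \ref{sat}. I would group the rules into three clusters: the purely order-theoretic and propositional rules (1)--(5), the equality rules (6)--(7), and the quantifier and Frobenius rules (8)--(9).

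For rules (1)--(5), rule (1) is reflexivity of $\leq$ and rule (2) is its transitivity. Rules (3)(i)--(iv) and (4)(i)--(ii) are precisely the universal properties of finite meet and arbitrary join in the poset $L$: at each $s$, the values $gr(s\ \mathrm{sat}\ \phi\wedge\psi)$ and $gr(s\ \mathrm{sat}\ \bigvee S)$ are by definition the meet and join of the component values, so the premise inequalities transfer to the conclusion. Rule (5), the crucial distributivity sequent, is exactly the identity defining a frame (Definition \ref{fm}) applied to $gr(s\ \mathrm{sat}\ \phi)$ and $\{gr(s\ \mathrm{sat}\ \psi)\mid\psi\in S\}$; this is where the choice of a frame rather than a mere complete lattice for $L$ becomes essential.

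Rules (6) and (7) require the identity clauses. Rule (6) is immediate since $s(x)=s(x)$ forces $gr(s\ \mathrm{sat}\ x=x)=1_L$. Rule (7) proceeds by a small case split: if any component equality fails, the antecedent already has value $0_L$ and the sequent is trivial; if all componentwise equalities hold, then $s$ and its rewrite agree on the free variables of $\phi$, so Theorem \ref{LDII} together with the Substitution Theorem (Theorem \ref{substitution}) give $gr(s\ \mathrm{sat}\ \phi)=gr(s\ \mathrm{sat}\ \phi[(y_1,\ldots,y_n)/(x_1,\ldots,x_n)])$.

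The main technical work, and what I expect to be the chief obstacle, lies in rules (8) and (9). For (8)(i) and (8)(ii) one uses Theorem \ref{substitution} to rewrite $gr(s\ \mathrm{sat}\ \psi[x/y])$ as $gr(s(s(x)/y)\ \mathrm{sat}\ \psi)$; this value then sits inside the supremum defining $gr(s\ \mathrm{sat}\ \exists y\psi)$, and the premises yield the required inequality uniformly in $s$ (for (8)(ii), since the premise holds at every sequence, one substitutes $s(d/y)$ for $s$ and takes the supremum over $d\in D$). For Frobenius (9), the standard side-condition that $y$ does not occur free in $\phi$ lets Theorem \ref{LDII} give $gr(s(d/y)\ \mathrm{sat}\ \phi)=gr(s\ \mathrm{sat}\ \phi)$ for all $d\in D$; frame distributivity then rewrites the left-hand side as $\sup_{d\in D}\bigl(gr(s\ \mathrm{sat}\ \phi)\wedge gr(s(d/y)\ \mathrm{sat}\ \psi)\bigr)$, which equals $\sup_{d\in D} gr(s(d/y)\ \mathrm{sat}\ \phi\wedge\psi)=gr(s\ \mathrm{sat}\ (\exists y)(\phi\wedge\psi))$. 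Orchestrating the substitution lemma, the coincidence lemma for free variables, and frame distributivity simultaneously is the delicate part; once they are marshalled, each rule reduces to a short calculation that I would present case-by-case.
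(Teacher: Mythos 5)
Your proposal follows essentially the same route as the paper's proof: a case-by-case check of the nine rules, reducing each to the order and distributivity laws of the frame $L$ via the clauses of Definition \ref{sat}, with Theorem \ref{substitution} doing the work for the equality and quantifier rules. The only noticeable divergence is rule (9), where you impose the side condition that $y$ is not free in $\phi$ and appeal to Theorem \ref{LDII} to get $gr(s(d/y)\ \text{sat}\ \phi)=gr(s\ \text{sat}\ \phi)$, whereas the paper argues through the step $gr(s\ \text{sat}\ \phi)\wedge gr(s(d/y)\ \text{sat}\ \psi)\leq gr(s(d/y)\ \text{sat}\ \phi)\wedge gr(s(d/y)\ \text{sat}\ \psi)$; your version is the more carefully justified variant of the same calculation.
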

\begin{proof}
\begin{enumerate}
\item $gr(s\ \text{sat}\ \phi)=gr(s\ \text{sat}\ \phi)$, for any $s$.
Hence $\phi\vdash\phi$ is valid.
\item Given $\phi\vdash \psi$ and $\psi\vdash\chi$ are valid.
So $gr(s\ \text{sat}\ \phi)\leq gr(s\ \text{sat}\ \psi)$ and $gr(s\ \text{sat}\ \psi)\leq gr(s\ \text{sat}\ \chi)$ for any $s$. Therefore $gr(s\ \text{sat}\ \phi)\leq gr(s\ \text{sat}\ \chi)$ for any $s$. Hence $\phi\vdash\chi$ is valid when $\phi\vdash\psi$ and $\psi\vdash\chi$  are valid.
\item (i) $gr(s\ \text{sat}\ \phi)\leq 1_L=gr(s\ \text{sat}\ \top)$ for any $s$.
Hence $\phi\vdash \top$ is valid.\\
(ii) $gr(s\ \text{sat}\ \phi\wedge\psi)=gr(s\ \text{sat}\ \phi)\wedge gr(s\ \text{sat}\ \psi)\leq gr(s\ \text{sat}\ \phi)$ for any $s$.
Hence $\phi\wedge\psi\vdash\phi$ is valid.\\
(iii) $gr(s\ \text{sat}\ \phi\wedge\psi)=gr(s\ \text{sat}\ \phi)\wedge gr(s\ \text{sat}\ \psi)\leq gr(s\ \text{sat}\ \psi)$ for any $s$.
Hence $\phi\wedge\psi\vdash\psi$ is valid.\\
(iv) Given $\phi\vdash\psi$ and $\phi\vdash\chi$ are valid. So $gr(s\ \text{sat}\ \phi)\leq gr(s\ \text{sat}\ \psi)$ and $gr(s\ \text{sat}\ \phi)\leq gr(s\ \text{sat}\ \chi)$ for any $s$. So $gr(s\ \text{sat}\ \phi)\leq gr(s\ \text{sat}\ \psi)\wedge gr(s\ \text{sat}\ \chi)=gr(s\ \text{sat}\ \psi\wedge\chi)$ for any $s$. Hence $\phi\vdash \psi\wedge \chi$ is valid when $\phi\vdash\psi$ and $\phi\vdash\chi$ are valid.
\item (i) $gr(s\ \text{sat}\ \phi)\leq gr(s\ \text{sat}\ \bigvee S(\phi\in S))$ for any $s$. Hence $\phi\vdash \bigvee S (\phi\in S)$ is valid.
(ii) Given $\phi\vdash \psi$ is valid for all $\phi\in S$.
So $gr(s\ \text{sat}\ \phi)\leq gr(s\ \text{sat}\ \psi)$ for all $\phi\in S$ and any $s$.
So, $sup_{\phi\in S}\{gr(s\ \text{sat}\ \phi)\}\leq gr(s\ \text{sat}\ \psi)$ for any $s$. Hence $gr(s\ \text{sat}\ \bigvee S)\leq gr(s\ \text{sat}\ \psi)$ for any $s$. So, $\bigvee S\vdash\psi$ is valid when $\phi\vdash \psi$ is valid for all $\phi\in S$.
\item We have, 
$gr(s\ \text{sat}\ \phi\wedge\bigvee S) = gr(s\ \text{sat}\ \phi)\wedge gr(s\ \text{sat}\ \bigvee S) = gr(s\ \text{sat}\ \phi)\wedge sup_{\psi\in S}\{gr(s\ \text{sat}\ \psi)\} = sup_{\psi\in S}\{gr(s\ \text{sat}\ \phi)\wedge gr(s\ \text{sat}\ \psi)\} = sup\{gr(s\ \text{sat}\ \phi\wedge\psi)\mid\psi\in S\},\ \ \text{for any}\ s$.
Hence $\phi\wedge\bigvee S\vdash sup\{\phi\wedge\psi\mid\psi\in S\}$ is valid.
\item $gr(s\ \text{sat}\ \top)=1_L=gr(s\ \text{sat}\ x=x)$, for any $s$.
Hence $\top\vdash x=x$ is valid.
\item $gr(s\ \text{sat}\ ((x_1,\dots,x_n)=(y_1,\dots,y_n))\wedge\phi)$\\
$=gr(s\ \text{sat}\ ((x_1,\dots,x_n)=(y_1,\dots,y_n)))\wedge gr(s\ \text{sat}\ \phi)$.\\
Now $gr(s\ \text{sat}\ \phi[(y_1,\dots,y_n)/(x_1,\dots,x_n)])$\\
$=gr(s(s((y_1,\dots,y_n))/(x_1,\dots,x_n))\ \text{sat}\ \phi)$.\\
When $s((y_1,\dots,y_n))=s((x_1,\dots,x_n))$ \\
then $gr(s(s((y_1,\dots,y_n))/(x_1,\dots,x_n))\ \text{sat}\ \phi)=gr(s\ \text{sat}\ \phi)$.
\\ Hence, $gr(s\ \text{sat}\ ((x_1,\dots,x_n)=(y_1,\dots,y_n))\wedge\phi)$\\
$\leq gr(s\ \text{sat}\ \phi[(y_1,\dots,y_n)/(x_1,\dots,x_n)])$, for any $s$.\\
So, $((x_1,\dots,x_n)=(y_1,\dots,y_n))\wedge\phi\vdash\phi[(y_1,\dots,y_n)/(x_1,\dots,x_n)]$ is valid.
\item (i) $\phi\vdash \psi[x\mid y]$ is valid so, $gr(s\ \text{sat}\ \phi)\leq gr(s\ \text{sat}\ \psi[x\mid y])$, for any $s$. Using Theorem \ref{substitution}(2) $gr(s\ \text{sat}\ \phi)\leq gr(s(s(x)/y)\ \text{sat}\ \psi)$, for any $s$, which implies that $gr(s\ \text{sat}\ \phi)\leq sup\{gr(s(d/y)\ \text{sat}\ \psi)\mid d\in D\}$, for any $s$. So, $gr(s\ \text{sat}\ \phi)\leq gr(s\ \text{sat}\ \exists y\psi)$ and hence $\phi\vdash\exists y\psi$ is valid.\\
(ii) $\exists y\phi\vdash \psi$ is valid if and only if $gr(s\ \text{sat}\ \exists y\phi)\leq gr(s\ \text{sat}\ \psi)$, for any $s$. Hence $sup\{gr(s(d/y)\ \text{sat}\ \phi)\mid d\in D\}\leq gr(s\ \text{sat}\ \psi)$, for any $s$. So, $gr(s(s(x)/y)\ \text{sat}\ \phi)\leq gr(s\ \text{sat}\ \psi)$, for any $s$, using Theorem \ref{substitution}(2). Therefore $gr(s\ \text{sat}\ \phi[x/y])\leq gr(s\ \text{sat}\ \psi)$, for any $s$ and hence $\phi[x/y]\vdash\psi$ is valid provided $\exists y\phi\vdash\psi$ is valid.
\item $gr(s\ \text{sat}\ \phi\wedge (\exists y)\psi)$
$=gr(s\ \text{sat}\ \phi)\wedge gr(s\ \text{sat}\ \exists y\psi)$
$=gr(s\ \text{sat}\ \phi)\wedge sup_{d\in D}\{gr(s(d/y)\ \text{sat}\ \psi)\}$
$=sup_{d\in D}\{gr(s\ \text{sat}\ \phi)\wedge gr(s(d/y)\ \text{sat}\ \psi)\}$
$\leq sup_{d\in D}\{gr(s(d/y)\ \text{sat}\ \phi)\wedge gr(s(d/y)\ \text{sat}\ \psi)\}$ 
$=sup_{d\in D}\{gr(s\ \text{sat}\ \phi\wedge\psi)\}$
$=gr(s\ \text{sat}\ (\exists y)\phi\wedge\psi)$, for any $s$.
Hence $\phi\wedge(\exists y)\psi\vdash(\exists y)(\phi\wedge\psi)$ is valid.
\end{enumerate}
\end{proof}

\section{$L$-Topological System via Generalised Geometric Logic}\label{ltsy}
Let us consider the triplet $(X,\models,A)$ where $X$ is the non empty set of assignments $s$, $A$ is the set of geometric formulae and $\models$ defined as $gr(s\models \phi)=gr(s\ \text{sat}\ \phi)$.
\begin{theorem}\label{fts}
(i) $gr(s\models \phi\wedge\psi)=gr(s\models \phi)\wedge gr(s\models\psi)$.
\\(ii) $gr(s\models\bigvee\{\phi_i\}_{i\in I})=sup_{i\in I}\{gr(x\models \phi_i)\}$.
\end{theorem}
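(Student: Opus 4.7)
The plan is essentially a direct unfolding of definitions, since the relation $\models$ has been defined so that $gr(s\models\phi)=gr(s\ \text{sat}\ \phi)$ for every assignment $s$ and every geometric formula $\phi$. So both parts reduce to the corresponding clauses in Definition~\ref{sat}.

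For part (i), I would start from the left-hand side and apply the definition of $\models$ to rewrite $gr(s\models\phi\wedge\psi)$ as $gr(s\ \text{sat}\ \phi\wedge\psi)$. Then the clause for conjunction in the definition of graded satisfiability gives $gr(s\ \text{sat}\ \phi)\wedge gr(s\ \text{sat}\ \psi)$, where $\wedge$ on the right is the meet in the frame $L$. Finally, a second use of the definition of $\models$ rewrites this as $gr(s\models\phi)\wedge gr(s\models\psi)$, which is what is required.

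For part (ii), the argument is parallel. Starting from $gr(s\models\bigvee\{\phi_i\}_{i\in I})$, the definition of $\models$ rewrites it as $gr(s\ \text{sat}\ \bigvee\{\phi_i\}_{i\in I})$; the clause for arbitrary disjunction in Definition~\ref{sat} yields $\sup\{gr(s\ \text{sat}\ \phi_i)\mid i\in I\}$; and then rewriting each $sat$ back as $\models$ gives $\sup_{i\in I}\{gr(s\models\phi_i)\}$.

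There is no real obstacle here, since the result is a direct translation of two clauses in the definition of graded satisfiability into the notation of the triple $(X,\models,A)$. The only point worth pausing on is that the meet in part (i) is the finite meet in the frame $L$ and the supremum in part (ii) is taken in $L$ (which exists because $L$ is a complete lattice); both of these properties are already guaranteed by the fact that $L$ is a frame, so no extra verification is needed.
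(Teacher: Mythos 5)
Your proposal is correct and matches the paper's own proof exactly: both parts are handled by unfolding $gr(s\models\phi)=gr(s\ \text{sat}\ \phi)$, applying the conjunction and arbitrary-join clauses of Definition~\ref{sat}, and rewriting back in terms of $\models$. Your added remark that the meet and supremum live in the frame $L$ is a harmless (and accurate) clarification beyond what the paper states.
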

\begin{proof}
(i) $gr(s\models \phi\wedge\psi)=gr(s\ \text{sat}\ \phi\wedge\psi)=gr(s\ \text{sat}\ \phi)\wedge gr(s\ \text{sat}\ \psi)=gr(s\models \phi)\wedge gr(s\models \psi)$.
(ii) $gr(s\models\bigvee\{\phi_i\}_{i\in I})=gr(s\ \text{sat}\ \bigvee\{\phi_i\}_{i\in I})=sup_{i\in I}\{gr(s\ \text{sat}\ \phi_i)\}=sup_{i\in I}\{gr(s\models\phi_i)\}$.
\end{proof} 
\begin{definition}\label{equiv}
$\phi\approx\psi$ iff $gr(s\models \phi)=gr(s\models \psi)$ for any $s\in X$ and $\phi,\psi\in A$.
\end{definition}
The above defined ``$\approx$" is an equivalence relation. Thus we get $A/_{\approx}$.
\begin{theorem}\label{ftopsys}
$(X,\models ',A/_{\approx})$ is an $L$-topological system, where $\models '$ is defined by $gr(s\models '[\phi])=gr(s\models \phi)$.
\end{theorem}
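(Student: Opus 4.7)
The plan is to transport the frame structure of $L$ to $A/_{\approx}$ via the map $[\phi]\mapsto(s\mapsto gr(s\models\phi))$ and then read off the two axioms of an $L$-topological system from \rth{fts}. Concretely, I set $[\phi]\wedge[\psi]:=[\phi\wedge\psi]$, $\bigvee_{i\in I}[\phi_i]:=[\bigvee_{i\in I}\phi_i]$, with top $[\top]$, bottom $[\bot]$, and order $[\phi]\leq[\psi]$ iff $gr(s\models\phi)\leq gr(s\models\psi)$ for every $s\in X$.

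The first step is to verify that these operations, the order, and $\models'$ itself are independent of the choice of representatives. Well-definedness of $\models'$ is immediate from the definition of $\approx$. For the binary meet, if $\phi\approx\phi'$ and $\psi\approx\psi'$ then by \rth{fts}(i) we have $gr(s\models\phi\wedge\psi)=gr(s\models\phi)\wedge gr(s\models\psi)=gr(s\models\phi')\wedge gr(s\models\psi')=gr(s\models\phi'\wedge\psi')$ for every $s$, hence $\phi\wedge\psi\approx\phi'\wedge\psi'$; the analogous computation using \rth{fts}(ii) handles arbitrary joins. Next I check that $(A/_{\approx},\leq,\wedge,\bigvee,[\top],[\bot])$ is a frame: the lattice identities follow by transporting the corresponding pointwise identities in $L$ through \rth{fts}, while frame distributivity $[\phi]\wedge\bigvee_i[\psi_i]=\bigvee_i([\phi]\wedge[\psi_i])$ reduces to $gr(s\models\phi)\wedge\sup_i gr(s\models\psi_i)=\sup_i\bigl(gr(s\models\phi)\wedge gr(s\models\psi_i)\bigr)$ for every $s$, which holds because $L$ is a frame (\rdef{fm}).

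It then remains to verify the two clauses of \rdef{3.1}. For a finite subset $S=\{[\phi_1],\ldots,[\phi_n]\}$ of $A/_{\approx}$, iterated application of \rth{fts}(i) yields $gr(s\models'\bigwedge S)=gr(s\models\phi_1\wedge\cdots\wedge\phi_n)=\inf_i gr(s\models\phi_i)=\inf\{gr(s\models'[\phi_i])\mid i\}$, with the empty case handled by $gr(s\models\top)=gr(s\ \text{sat}\ \top)=1_L$. For an arbitrary family $\{[\phi_i]\}_{i\in I}$, \rth{fts}(ii) gives directly $gr(s\models'\bigvee_i[\phi_i])=gr(s\models\bigvee_i\phi_i)=\sup_i gr(s\models\phi_i)=\sup\{gr(s\models'[\phi_i])\mid i\in I\}$. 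There is no substantial obstacle in any of this: everything reduces via \rth{fts} to the corresponding pointwise identity in $L$, and the only place a frame-specific property is needed is the distributivity of $A/_{\approx}$, which is precisely where the frame axiom of $L$ is invoked.
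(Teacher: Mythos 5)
Your proposal is correct and follows essentially the same route as the paper: pass to the quotient $A/_{\approx}$, equip it with the representative-wise order and operations, show it is a frame, and then read off the two axioms of an $L$-topological system from Theorem~\ref{fts}. The only divergences are minor: the paper obtains the distributive law in $A/_{\approx}$ from the validity of inference rule 5 together with derivability of the converse sequent, whereas you reduce it directly to the frame law of $L$ applied pointwise (which is the same semantic fact), and you make the well-definedness of the quotient operations explicit, a point the paper leaves implicit.
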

\begin{proof}
$X$ is a non empty set of assignments $s$.
Let us first prove that $A/_{\approx}$ is a frame in the following way. Here we define $[\phi]\leq [\psi]$ as follows: $[\phi]\leq [\psi]\  \text{iff}\ \ gr(s\models\phi)\leq gr(s\models \psi)\ \ \ \text{for any}\ s$ i.e., $\phi\vdash\psi$ is valid.
Now in generalised geometric logic $\phi\vdash\phi$ is valid and if $\phi\vdash\psi$ and $\psi\vdash\chi$ are valid then $\phi\vdash\chi$ is valid. Thus $\leq$ is reflexive and transitive. If $[\phi]\leq [\psi]$ and $[\psi]\leq [\phi]$ then $gr(s\models \phi)\leq gr(s\models \psi)$ and $gr(s\models \psi)\leq gr(s\models \phi)$ for any $s$. Therefore $gr(s\models \phi)=gr(s\models \psi)$ for any $s$. So $\phi\approx\psi$. Consequently $[\phi]=[\psi]$. Hence $A/_{\approx}$ is a poset. Now if $\phi,\psi\in A$ then $\phi\wedge\psi\in A$ (by Theorem \ref{fts}). So $[\phi],[\psi]\in A/_{\approx}$ and $[\phi\wedge\psi]\in A/_{\approx}$ i.e., $[\phi]\wedge [\psi]\in A/_{\approx}$. Similarly arbitrary join exists in $A/_{\approx}$. $[\phi]\wedge\bigvee\{[\psi_i]\}_{i\in I}=[\phi]\wedge [\bigvee\{\psi_i\}_{i\in I}]=[\phi\wedge\bigvee\{\psi_i\}_{i\in I}]$ Now we have $\phi\wedge\bigvee\{\psi_i\}_{i\in I}\vdash sup_{i\in I}\{\phi\wedge\psi_i\}$ is valid. Hence $gr(s\ \text{sat}\ \phi\wedge\bigvee\{\psi_i\}_{i\in I})\leq gr(s\ \text{sat}\ \bigvee\{\phi\wedge\psi_i\}_{i\in I})$ for any $s$. $sup_{i\in I}\{\phi\wedge\psi_i\}\vdash \phi\wedge\bigvee\{\psi_i\}_{i\in I}$ is derivable, so $gr(s\ \text{sat}\ \bigvee\{\phi\wedge\psi_i\}_{i\in I})\leq gr(s\ \text{sat}\ \phi\wedge\bigvee\{\psi_i\}_{i\in I})$ for any $s$. Therefore $gr(s\ \text{sat}\ \phi\wedge\bigvee\{\psi_i\}_{i\in I})=gr(s\ \text{sat}\ \bigvee\{\phi\wedge\psi_i\}_{i\in I})$ for any $s$. So, $[\phi\wedge\bigvee\{\psi_i\}_{i\in I}]=[\bigvee\{\phi\wedge\psi_i\}_{i\in I}]$. Hence $[\phi]\wedge\bigvee\{[\psi_i]\}_{i\in I}=[\bigvee\{\phi\wedge\psi_i\}_{i\in I}]=\bigvee\{[\phi\wedge\psi_i]\}_{i\in I}=\bigvee\{([\phi]\wedge [\psi_i])\}_{i\in I}$. Hence $A/_{\approx}$ is a frame.\\
Now it is left to show that (a) $gr(s\models ' [\phi]\wedge [\psi])=gr(s\models '[\phi])\wedge gr(s\models ' [\psi])$ and (b) $gr(s\models '\bigvee\{[\phi_i]\}_{i\in I})=sup_{i\in I}\{gr(s\models '[\phi_i])\}$.
\\Proof of the above follow easily using Theorem \ref{fts}. Hence $(X,\models ',A/_{\approx})$ is an $L$-topological system.
\end{proof}
\begin{proposition}\label{spec}
In the $L$-topological system $(X,\models ',A/_{\approx})$, defined above, for all $s\in X$, $(gr(s\models '[\phi])=gr(s\models '[\psi]))\ \text{implies}\ ([\phi]=[\psi])$.
\end{proposition}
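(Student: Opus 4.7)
The plan is to derive the conclusion directly by unwinding the definitions involved in the construction of $(X,\models',A/_{\approx})$; no real obstacle is expected, since spatiality is essentially forced by the way we quotiented $A$ by $\approx$ in the first place. The whole point of Definition~\ref{equiv} was to identify precisely those formulae which cannot be distinguished by any assignment $s$, so Proposition~\ref{spec} should be read as a sanity check that the quotient is sharp enough.

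First I would assume the hypothesis: fix $[\phi],[\psi] \in A/_{\approx}$ and suppose that $gr(s\models'[\phi]) = gr(s\models'[\psi])$ for every $s \in X$. Next, I would invoke the defining equation of $\models'$ given in the statement of Theorem~\ref{ftopsys}, namely $gr(s\models'[\chi]) = gr(s\models\chi)$, applied with $\chi := \phi$ and $\chi := \psi$. This turns the hypothesis into $gr(s\models\phi) = gr(s\models\psi)$ for all $s \in X$.

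Then I would appeal to Definition~\ref{equiv}, which says exactly that the above equality for all $s$ is the definition of $\phi \approx \psi$. From $\phi \approx \psi$ one concludes $[\phi] = [\psi]$ in the quotient $A/_{\approx}$, which is the desired conclusion.

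The only subtlety worth mentioning is that $\models'$ is well-defined on equivalence classes: this must be implicit from Theorem~\ref{ftopsys}, since if $\phi \approx \phi'$ then by Definition~\ref{equiv} $gr(s\models\phi) = gr(s\models\phi')$ for all $s$, so the assignment $[\phi] \mapsto gr(s\models\phi)$ does not depend on the representative. With this check in hand, the argument above is a pure definition chase, and the proposition follows.
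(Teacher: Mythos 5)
Your proposal is correct and is essentially identical to the paper's own proof: both unwind the definition of $\models'$ to get $gr(s\models\phi)=gr(s\models\psi)$ for all $s$, then apply Definition~\ref{equiv} to conclude $\phi\approx\psi$ and hence $[\phi]=[\psi]$. Your extra remark on well-definedness of $\models'$ is a reasonable (if unstated in the paper) sanity check, but otherwise there is no difference in approach.
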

\begin{proof}
As $gr(s\models '[\phi])=gr(s\models '[\psi])$, for any $s$, we have $gr(s\models \phi)=gr(s\models \psi)$, for any $s$. Hence $\phi\approx\psi$ and consequently $[\phi]=[\psi]$.
\end{proof}
\section{$L$-Topology via Generalised Geometric Logic}\label{lts}
We first construct the $L$-topological system $(X,\models ',A/_{\approx})$ from generalised geometric logic. Then $(X,ext(A/_{\approx}))$ is constructed as follows:
\\ $ext(A/_{\approx})=\{ext([\phi])\}_{[\phi]\in A/_{\approx}}$ where
$ext([\phi]):X\longrightarrow L$ is such that, for each $[\phi]\in A/_{\approx}$, $ext([\phi])(s)=gr(s\models '[\phi])=gr(s\models \phi)$.\\
It can be shown that $ext(A/_{\approx})$ forms an $L$-topology on $X$ as follows.\\
Let $ext([\phi]),ext([\psi])\in ext(A/_{\approx})$. 
Then $(ext([\phi ])\cap ext([\psi ]))(s)= \\(ext([\phi ]))(s)\wedge (ext([\psi ]))(s)= gr(s\models ' [\phi ])\wedge gr(s\models ' [\psi ]) = gr(s\models \phi )\wedge gr(s\models \psi ) = gr(s\models \phi\wedge \psi ) = gr(s\models '[\phi \wedge \psi ]) = (ext([\phi \wedge \psi ]))(s)$. Hence $ext([\phi])\cap ext([\psi])=ext([\phi\wedge \psi])\in ext(A/_{\approx})$. Similarly it can be shown that $ext(A/_{\approx})$ is closed under arbitrary union. Hence $(X,ext(A/_{\approx}))$ is an $L$-topological space obtained via generalised geometric logic.\\

Proposition \ref{spec} indicates that $(X,\models',A/_{\approx})$ is a spatial $L$-topological system and hence from Theorem \ref{spatialtop} we arrive at the conclusion that $(X,\models',A/_{\approx})$, $(A,ext(A/_{\approx}))$ are equivalent to each other. That is, $(X,\models',A/_{\approx})$ and $(X,\in,ext(A/_{\approx}))$ represent the same $L$-topological system.

Let $X$ be an $L$-topological space, $\tau$ is its $L$-topology. Then the corresponding generalised geometric theory can be defined as follows:
\begin{itemize}
    \item for each $L$-open set $\tilde{T}$, a proposition $P_{\tilde{T}}$
    \item if $\tilde{T_1}\subseteq \tilde{T_2}$, then an axiom $$P_{\tilde{T_1}}\vdash P_{\tilde{T_2}}$$
    \item if $S$ is a family of $L$-open sets, then an axiom 
    $$P_{\bigcup S}\vdash\bigvee_{\tilde{T}\in S}P_{\tilde{T}}$$
    \item if $S$ is finite collection of $L$-open sets, then an axiom
    $$\bigwedge_{\tilde{T}\in S}P_{\tilde{T}}\vdash P_{\bigcap S}$$
\end{itemize}
All other axioms for the (propositional) generalised geometric logic  will follow from the above clauses.

If $x\in X$, then $x$ gives a model of the theory in which the truth value of the  interpretation of $P_{\tilde{T}}$ will be $\tilde{T}(x)$.

Hence one may study $L$-topology via generalised geometric logic.
\section{Concluding Remarks}\label{con}
In this paper the notion of generalised geometric logic is introduced and studied in details. Using the connection between $L$-topological system and $L$-topological space, the strong connection between the proposed logic and $L$-topological space is established. The interpretation of the predicate symbols for the generalised geometric logic are $L$ (frame)-valued relations and so the proposed logic is more expressible. That is, the proposed logic has the capacity to interpret the situation where the truth values are incomparable. Generalising the proposed logic considering graded consequence relation is in future goal which will appear in our next paper. 
\section*{References}

\bibliography{mybibfile}

\end{document}